
\documentclass[letterpaper, 10 pt, conference]{ieeeconf}  

\IEEEoverridecommandlockouts                              

\overrideIEEEmargins                                      


\usepackage{epsfig} 
\usepackage{amsmath} 
\usepackage{amssymb}  

\newtheorem{thm}{Theorem}

\newtheorem{cor}{Corollary}

\newtheorem{defn}{Definition}
\newtheorem{rem}{Remark}
\newtheorem{example}{Example}

\DeclareMathOperator{\Rank}{rank}

\usepackage{comment}


\title{\LARGE \bf
A Trajectory-Based Approach to Discrete-Time Flatness
}


\author{Johannes Diwold$^{1}$, Bernd Kolar$^{2}$ and Markus Sch{\"o}berl$^{1}$
	\thanks{*This work has been supported by the Austrian Science Fund (FWF) under grant number P~32151 and P~29964.}
	\thanks{$^{1}$Institute of Automatic Control and Control Systems Technology, Johannes Kepler University Linz, Altenbergerstra\ss e 66, 4040 Linz, Austria\newline
		{\tt\small johannes.diwold@jku.at,markus.schoeberl@jku.at}}%
	\thanks{$^{2}$Magna Powertrain Engineering Center Steyr GmbH \& Co KG, Steyrer Str. 32, 4300 St. Valentin, Austria
		{\tt\small bernd\_kolar@ifac-mail.org}}%
}

\begin{document}

\maketitle
\thispagestyle{empty}
\pagestyle{empty}




	



\setlength{\arraycolsep}{2pt} 
\begin{abstract}
For discrete-time systems, flatness is usually defined by replacing
the time-derivatives of the well-known continuous-time definition
by forward-shifts. With this definition, the class of flat systems
corresponds exactly to the class of systems which can be linearized
by a discrete-time endogenous dynamic feedback as it is proposed in
the literature. Recently, verifiable necessary and sufficient differential-geometric
conditions for this property have been derived. In the present contribution,
we make an attempt to take into account also backward-shifts. This
extended approach is motivated by the one-to-one correspondence of
solutions of flat systems to solutions of a trivial system as it is
known from the continuous-time case. If we transfer this idea to the
discrete-time case, this leads to an approach which also allows backward-shifts.
To distinguish the classical definition with forward-shifts and the
approach of the present paper, we refer to the former as forward-flatness.
We show that flat systems (in the extended sense with backward-shifts)
still share many beneficial properties of forward-flat systems. In
particular, they still are reachable/controllable, allow a straightforward
planning of trajectories and can be linearized by a certain subclass
of dynamic feedbacks.

\end{abstract}

\section{Introduction}

In the 1990s, the concept of flatness has been introduced by Fliess,
L\'{e}vine, Martin and Rouchon for nonlinear continuous-time systems
(see e.g. \cite{FliessLevineMartinRouchon:1995} and \cite{FliessLevineMartinRouchon:1999}).
Flat continuous-time systems have the characteristic feature that
all system variables can be parameterized by a flat output and its
time derivatives. This leads to a one-to-one correspondence of solutions
of a flat system to solutions of a trivial system with the same number
of inputs. Flat systems form an extension of the class of static feedback
linearizable systems and can be linearized by an endogenous dynamic
feedback. Their popularity stems from the fact that many physical
systems possess the property of flatness and that the knowledge of
a flat output allows an elegant solution to motion planning problems
and design of tracking controllers.

For nonlinear discrete-time systems, flatness is usually defined by
replacing the time-derivatives of the well-known continuous-time definition
by forward-shifts. More precisely, the flat output is a function of
the state variables, input variables, and forward-shifts of the input
variables. Conversely, the state- and input variables can be expressed
as functions of the flat output and its forward-shifts. This point
of view has been adopted in \cite{KaldmaeKotta:2013}, \cite{Sira-RamirezAgrawal:2004}
and \cite{KolarKaldmaeSchoberlKottaSchlacher:2016}. With this definition,
the class of flat systems corresponds exactly to the class of systems
which can be linearized by a discrete-time endogenous dynamic feedback
as it is proposed e.g. in \cite{Aranda-BricaireMoog:2008}. Recently,
verifiable necessary and sufficient differential-geometric conditions
have been derived in \cite{KolarSchoberlDiwold:2019} and \cite{KolarDiwoldSchoberl:2019}.
Furthermore, in \cite{DiwoldKolarSchoberl:2020} it has been shown
that in the two-input case even a transformation into a certain normal
form is always possible.

In this contribution, we focus solely on the one-to-one correspondence
of solutions of flat systems to solutions of a trivial system (arbitrary
trajectories that need not satisfy any equation) with the same number
of inputs, as it is known from the continuous-time case. For discrete-time
systems, this would mean that the flat output may depend both on forward-
and backward-shifts of the system variables. Conversely, the state-
and input variables could be expressed as functions of both forward-
and backward-shifts of the flat output. To distinguish the usual definition
of \cite{KaldmaeKotta:2013}, \cite{Sira-RamirezAgrawal:2004} and
\cite{KolarKaldmaeSchoberlKottaSchlacher:2016} with forward-shifts
from the alternative approach of the present paper, we refer to the
former as forward-flatness. A special case of this alternative definition
has already been suggested in \cite{GuillotMillerioux2020}, where
the flat output may depend also on backward-shifts of the input variables
but not on backward-shifts of the state variables. To justify our
alternative approach, we show that flat systems (in the extended sense
with backward-shifts) still share many beneficial properties of forward-flat
systems. In particular, we show that they still are reachable (and
hence controllable) and allow a straightforward planning of trajectories
to connect arbitrary points of the state space. Furthermore, we show
that they can be linearized by a dynamic feedback which shares the
beneficial properties of the class of continuous-time endogenous feedback.
With respect to the classical dynamic feedback linearization problem,
the following inclusions hold: (static feedback linearizable systems)
$\subset$ (forward-flat systems) $\subset$ (flat systems) $\subset$
(dynamic feedback linearizable systems). We show that for single-input
and linear systems the properties of flatness, forward-flatness and
static feedback linearizability are equivalent, and present an example
which shows that the class of forward-flat systems is a strict subset
of the class of flat systems in the extended sense.

The paper is organized as follows: In Section \ref{sec:Discrete-Time-Flatness}
we introduce the extended concept of flatness with both forward- and
backward-shifts and illustrate it by an example. Subsequently, we
discuss the special case of single-input systems. In Section \ref{sec:Feedback-Linearization}
we first demonstrate the planning of trajectories, prove the reachability
of flat systems and apply the concept to the sampled-data model of
an induction motor. Second, we show that flat systems can be linearized
by a particular subclass of dynamic feedbacks.\section{Discrete-Time Flatness with Forward- and Backward-Shifts} \label{sec:Discrete-Time-Flatness}Throughout
this contribution, we consider time-invariant discrete-time nonlinear
systems in state representation of the form
\begin{equation}
x^{i,+}=f^{i}(x,u)\,,\quad i=1,\dots,n\label{eq:System_Eq}
\end{equation}
with $\dim(x)=n$, $\dim(u)=m$ and smooth functions $f^{i}(x,u)$.
We assume that the systems meet the submersivity condition, i.e. that
the Jacobian-matrix of $f$ with respect to $(x,u)$ meets
\begin{equation}
\Rank(\partial_{(x,u)}f)=n\,.\label{eq:submersivity}
\end{equation}
This condition is necessary for reachability and consequently also
for flatness. However, we want to emphasize that we do not require
$\Rank(\partial_{x}f)=n$. As mentioned in \cite{Aranda-BricaireMoog:2008},
this property is always met by systems which stem from the exact or
approximate discretization of continuous-time systems. However, we
want to consider discrete-time systems in general, no matter whether
they stem from a discretization or not.

\subsection{Equivalence of Solutions\label{subsec:Equivalence-of-Solutions}}

To motivate our trajectory-based approach, we want to recall that
a continuous-time system $\dot{x}=f(x,u)$ is flat if there exists
a one-to-one correspondence between its solutions $(x(t),u(t))$ and
solutions $y(t)$ of a trivial system (sufficiently smooth but otherwise
arbitrary trajectories) with the same number of inputs (see e.g. \cite{FliessLevineMartinRouchon:1999}).

In the following, we attempt to define flatness for discrete-time
systems in exactly the same way. Within this paper, we call a discrete-time
system \eqref{eq:System_Eq} flat if there exists a one-to-one correspondence
between its solutions $(x(k),u(k))$ and solutions $y(k)$ of a trivial
system (arbitrary trajectories that need not satisfy any difference
equation) with the same number of inputs.

\[
\begin{array}{ccc}
x^{+}=f(x,u) &  & \begin{aligned}\text{trivial system}\end{aligned}
\\
\downarrow &  & \downarrow\\
(x(k),u(k)) & \overset{\text{one-to-one}}{\Longleftrightarrow} & (y(k))
\end{array}
\]
By one-to-one correspondence, we mean that the values of $x(k)$ and
$u(k)$ at some fixed time step $k$ may depend on an arbitrary but
finite number of future and past values of $y(k)$, i.e. on the whole
trajectory in an arbitrarily large but finite interval\footnote{Note that the time derivatives in the continuous-time case provide
	via the Taylor-expansion also information about the trajectory both
	in forward- and backward-direction.}. Conversely, the value of $y(k)$ at some fixed time step $k$ may
depend on an arbitrary but finite number of future and past values
of $x(k)$ and $u(k)$. Thus, the one-to-one correspondence of the
solutions can be expressed by maps of the form
\begin{equation}
(x(k),u(k))=F(k,y(k-r_{1}),\ldots,y(k),\ldots,y(k+r_{2}))\label{eq:parameterizing_map_forward_backward_k_variant}
\end{equation}
and
\begin{equation}
\begin{aligned}y(k)=\varphi(k,x(k-q_{1}),u(k-q_{1}),\ldots,\hphantom{aaaaaaaaaa}\\
x(k),u(k),\ldots,x(k+q_{2}),u(k+q_{2}))
\end{aligned}
\label{eq:flat_output_xu_redundant_k_variant}
\end{equation}
with suitable integers $r_{1},r_{2},q_{1},q_{2}$. These maps must
satisfy two conditions. First, in order to ensure the one-to-one correspondence,
the composition of \eqref{eq:parameterizing_map_forward_backward_k_variant}
with the occurring shifts of \eqref{eq:flat_output_xu_redundant_k_variant},
or vice versa, must yield the identity map. Second, since the trajectory
$y(k)$ of the trivial system is arbitrary, after substituting \eqref{eq:parameterizing_map_forward_backward_k_variant}
into the system equations \eqref{eq:System_Eq} they also must be
satisfied identically. Because of the time-invariance of the system
\eqref{eq:System_Eq}, within this paper we only consider maps
\begin{equation}
(x(k),u(k))=F(y(k-r_{1}),\ldots,y(k),\ldots,y(k+r_{2}))\label{eq:parameterizing_map_forward_backward}
\end{equation}
and
\begin{equation}
\begin{aligned}y(k)=\varphi(x(k-q_{1}),u(k-q_{1}),\ldots,x(k),u(k),\ldots,\\
x(k+q_{2}),u(k+q_{2}))\,.
\end{aligned}
\label{eq:flat_output_xu_redundant}
\end{equation}
which do not depend explicitly on the time step $k$.
\begin{rem}
	\label{rem:individual_shifts}The number of forward- and backward-shifts
	in (\ref{eq:parameterizing_map_forward_backward}) and (\ref{eq:flat_output_xu_redundant})
	can of course be different for the individual components of $y$,
	$x$ and $u$. Thus, where it is necessary we will use appropriate
	multi-indices.
\end{rem}

It is also important to note that the trajectories $x(k)$ and $u(k)$
are of course not independent. Since \eqref{eq:System_Eq} must hold
at every time step $k$, it is obvious that all forward-shifts $x(k+j)$
with $j\geq1$ of the state variables are determined by $x(k)$ and
forward-shifts $u(k+j-1)$, $j\geq1$ of the input variables, i.e.
\begin{align}
\begin{aligned}x(k+1) & =f(x(k),u(k))\\
x(k+2) & =f(f(x(k),u(k)),u(k+1))\,.\\
& \hphantom{a}\vdots
\end{aligned}
\label{eq:forward_restrictions}
\end{align}
Thus, the forward-shifts of the state variables in \eqref{eq:flat_output_xu_redundant}
are redundant. A similar argument holds for the backward-direction.
Since \eqref{eq:System_Eq} meets the submersivity condition \eqref{eq:submersivity},
there always exist $m$ functions $g(x,u)$ such that the map
\begin{align}
\begin{aligned}x^{+} & =f(x,u)\,,\end{aligned}
& \zeta=g(x,u)\label{eq:ex_diffeo}
\end{align}
is locally a diffeomorphism and hence invertible\footnote{It should be noted that the choice of $g(x,u)$ is not unique. For
	systems with $\Rank(\partial_{x}f)=n$, we could always choose $g(x,u)=u$
	and the variable $\zeta$ would represent the inputs $u$.}. If we denote by $(x,u)=\psi(x^{+},\zeta)$ its inverse
\begin{align}
\begin{aligned}x & =\psi_{x}(x^{+},\zeta)\,,\end{aligned}
& u=\psi_{u}(x^{+},\zeta)\,,\label{eq:inverse_ex_diffeo}
\end{align}
then all backward-shifts $x(k-j)$ and $u(k-j)$ of the state- and
input variables with $j\geq1$ are uniquely determined by $x(k)$
and the backward-shifts $\zeta(k-j)$, $j\geq1$ of the system variables
$\zeta$ defined by \eqref{eq:ex_diffeo}. This can be seen immediately
by a repeated evaluation of \eqref{eq:inverse_ex_diffeo}, which yields
\begin{align}
(x(k-1),u(k-1)) & =\psi(x(k),\zeta(k-1))\nonumber \\
(x(k-2),u(k-2)) & =\psi(\psi(x(k),\zeta(k-1)),\zeta(k-2))\,.\nonumber \\
& \hphantom{a}\vdots\label{eq:backward_restrictions}
\end{align}
Thus, with (\ref{eq:forward_restrictions}) and (\ref{eq:backward_restrictions})
the map \eqref{eq:flat_output_xu_redundant} can be written as
\begin{equation}
y(k)=\varphi(\zeta(k-q_{1}),\dots,\zeta(k-1),x(k),u(k),\dots,u(k+q_{2}))\,.\label{eq:flat_output}
\end{equation}
We conclude that in the trajectory-based approach the flat output
\eqref{eq:flat_output} is not only a function of $x$, $u$ and forward-shifts
of $u$, but also a function of backward-shifts of $\zeta$. Thus,
it extends the usual definition. 
\begin{rem}
	\label{rem:choice_zeta}It is important to emphasize that the flatness
	of the system (\ref{eq:System_Eq}) does not depend on the choice
	of the functions $g(x,u)$. Only the representation \eqref{eq:flat_output}
	of the flat output may differ, while the parameterization (\ref{eq:parameterizing_map_forward_backward})
	of $x$ and $u$ is not affected. If we would restrict ourselves to
	sampled data systems with $\Rank(\partial_{x}f)=n$, we could always
	choose $g(x,u)=u$. This approach leads to a definition of flatness
	as proposed in \cite{GuillotMillerioux2020}, where the flat output
	is a function of $x$, $u$, and forward- and backward-shifts of $u$.
\end{rem}

Before we give a precise geometric definition of flatness, we also
want to mention that considering both forward- and backward-shifts
in the parameterizing map \eqref{eq:parameterizing_map_forward_backward}
is actually not necessary. Indeed, if there exists a parameterizing
map \eqref{eq:parameterizing_map_forward_backward} and a flat output
\eqref{eq:flat_output}, then one can always define a new flat output
as the $r_{1}$-th backward-shift of the original flat output.\footnote{Note that the number of required backward-shifts may differ for the
	individual $m$ components of $y$, see Remark \ref{rem:individual_shifts}.} The corresponding parameterizing map is then of form
\begin{equation}
(x(k),u(k))=F(y(k),\dots,y(k+r))\label{eq:parameter_forward}
\end{equation}
with $r=r_{1}+r_{2}$.\footnote{Similarly, we may define a new flat output as the $q_{1}$-th forward-shift
	of the original flat output. The resulting flat output is then of
	the form $y(k)=\varphi(x(k),u(k),\dots,u(k+q))$, with $q=q_{1}+q_{2}$,
	and the corresponding parameterizing map of the form \eqref{eq:parameterizing_map_forward_backward}.} Thus, without loss of generality, in the remainder of the paper
we assume that the parameterizing map \eqref{eq:parameterizing_map_forward_backward}
is of the form \eqref{eq:parameter_forward} and contains only forward-shifts.

\subsection{Geometric Approach}

In order to give a concise definition of flatness including backward-shifts,
we use a space with coordinates $(\dots\zeta_{[-1]},x,u,u_{[1]}\dots)$,
where the subscript denotes the corresponding shift. Because of \eqref{eq:forward_restrictions}
and \eqref{eq:backward_restrictions}, every point of this space corresponds
to a unique trajectory $(x(k),u(k))$ of the system \eqref{eq:System_Eq}.
In accordance with (\ref{eq:ex_diffeo}), we have a forward-shift
operator $\delta$ defined by the rule
\begin{align*}
\begin{aligned}\delta(h(\dots,\zeta_{[-2]},\zeta_{[-1]},x,u,u_{[1]},\dots))=\hphantom{aaaaaaaaaa}\\
h(\dots,\zeta_{[-1]},g(x,u),f(x,u),u_{[1]},u_{[2]},\dots)
\end{aligned}
\end{align*}
for an arbitrary function $h$. Because of (\ref{eq:inverse_ex_diffeo}),
its inverse is given by the backward-shift operator
\[
\begin{aligned}\delta^{-1}(h(\dots,\zeta_{[-1]},x,u,u_{[1]},u_{[2]},\dots))=\hphantom{aaaaaaaaaa}\\
h(\dots,\zeta_{[-2]},\psi_{x}(x,\zeta_{[-1]}),\psi_{u}(x,\zeta_{[-1]}),u,u_{[1]},\dots)\,.
\end{aligned}
\]
Likewise, every point of a space with coordinates $(\ldots,y_{[-1]},y,y_{[1]},\dots)$
corresponds to a unique trajectory $y(k)$ of a trivial system. Here
the shift operators have the simple form
\begin{align*}
\delta_{y}(H(\ldots,y_{[-1]},y,y_{[1]},\dots)) & =H(\ldots,y,y_{[1]},y_{[2]},\dots)\,,\\
\delta_{y}^{-1}(H(\ldots,y_{[-1]},y,y_{[1]},\dots)) & =H(\ldots,y_{[-2]},y_{[-1]},y,\dots)\,.
\end{align*}
and $\beta$-fold application of $\delta$ and $\delta_{y}$ or their
inverses will be denoted by $\delta^{\beta}$ and $\delta_{y}^{\beta}$,
respectively.

With these preliminaries, we can give a geometric characterization
for the trajectory-based approach to discrete-time flatness suggested
in Section \ref{subsec:Equivalence-of-Solutions}. In accordance with
the literature on static and dynamic feedback linearization for discrete-time
systems, we consider a suitable neighborhood of an equilibrium $x_{0}=f(x_{0},u_{0})$,
see e.g. \cite{NijmeijervanderSchaft:1990} or \cite{Aranda-BricaireMoog:2008}.
However, we want to emphasize that for many systems the concept may
be useful even if the conditions fail to hold at an equilibrium.
\begin{defn}
	\label{def:Flatness}The system \eqref{eq:System_Eq} is said to be
	flat around an equilibrium $(x_{0},u_{0})$, if the $n+m$ coordinate
	functions $x$ and $u$ can be expressed locally by an $m$-tuple
	of functions
	\begin{equation}
	y^{j}=\varphi^{j}(\zeta_{[-q_{1}]},\dots,\zeta_{[-1]},x,u,\dots,u_{[q_{2}]})\,,\label{eq:flat_output_defintion}
	\end{equation}
	$j=1,\ldots,m$ and their forward-shifts
	\begin{align}
	\begin{aligned}y_{[1]} & =\delta(\varphi(\zeta_{[-q_{1}]},\dots,\zeta_{[-1]},x,u,\dots,u_{[q_{2}]}))\\
	y_{[2]} & =\delta^{2}(\varphi(\zeta_{[-q_{1}]},\dots,\zeta_{[-1]},x,u,\dots,u_{[q_{2}]}))\\
	& \hphantom{a}\vdots
	\end{aligned}
	\label{eq:to_be_replaced}
	\end{align}
	up to some finite order. The $m$-tuple \eqref{eq:flat_output_defintion}
	is called a flat output.
\end{defn}

If \eqref{eq:flat_output_defintion} is a flat output, then the representation
of $x$ and $u$ by the flat output is unique and a submersion of
the form\footnote{The multi-index $R=(r_{1},\dots,r_{m})$ of \eqref{eq:geo_param_map_2}
	contains the number of forward-shifts of each component of the flat
	output which is needed to express $x$ and $u$. The abbreviation
	$y_{[R]}$ denotes the components $y_{[R]}=(y_{[r_{1}]}^{1},\dots,y_{[r_{m}]}^{m})$,
	and the integer $r$ indicates the maximum number of forward-shifts
	that appear in the parameterization \eqref{eq:geo_param_map_2}, i.e.
	$r=\max(r_{1},\dots,r_{m})$.}
\begin{align}
\begin{aligned}x^{i} & =F_{x}^{i}(y,\dots,y_{[R-1]})\,,\quad i=1,\dots,n\\
u^{j} & =F_{u}^{j}(y,\dots,y_{[R]})\,,\hphantom{ia}\quad j=1,\dots,m\,.
\end{aligned}
\label{eq:geo_param_map_2}
\end{align}
We only sketch the proof of this statement. Since $x$ and $u$ can
be expressed by $\varphi,\delta(\varphi),\dots,\delta^{r}(\varphi)$,
also all forward-shifts of $u$ and all backward-shifts of $\zeta$
can be expressed by $\dots,\delta^{-1}(\varphi),\varphi,\delta(\varphi),\dots$.
By using the fact that the coordinate functions $u,u_{[1]},\dots$
and $\zeta_{[-1]},\zeta_{[-2]},\dots$ are functionally independent,
it can be shown with basic geometric concepts that also all forward-
and backward-shifts of $\varphi$ must be functionally independent.
The functional independence of $\dots,\delta^{-1}(\varphi),\varphi,\delta(\varphi),\dots$
guarantees that \eqref{eq:geo_param_map_2} is unique. Based on the
identity $(x,u)=F(\varphi,\delta(\varphi),\dots,\delta^{r}(\varphi))$
it can be shown that \eqref{eq:geo_param_map_2} is a submersion.
The fact that the Jacobian matrix $\partial_{(x,u)}F(\varphi,\delta(\varphi),\dots,\delta^{r}(\varphi))$
results in an identity matrix implies that the rows of the Jacobian
matrix of $F$ with respect to $(y,\dots,y_{[R]})$ are linearly independent.
The special structure that $F_{x}$ is independent of $y_{[R]}$ is
a consequence of the identity $\delta_{y}(F_{x})=f(F_{x},F_{u})$.

If we restrict ourselves to forward-shifts in the flat output, then
Definition \ref{def:Flatness} leads to the special case of forward-flatness.
\begin{defn}
	\label{def:forward-flatness}The system \eqref{eq:System_Eq} is said
	to be forward-flat, if it meets the conditions of Definition \ref{def:Flatness}
	with a flat output of the form $y^{j}=\varphi^{j}(x,u,\dots,u_{[q_{2}]})$.
\end{defn}

The class of forward-flat systems has already been analyzed in detail
in the literature, see e.g. \cite{KaldmaeKotta:2013}, \cite{Sira-RamirezAgrawal:2004}
and \cite{KolarKaldmaeSchoberlKottaSchlacher:2016}. In \cite{KolarSchoberlDiwold:2019},
it has been shown that every forward-flat system can be decomposed
into a smaller dimensional forward-flat subsystem and an endogenous
dynamic feedback by a suitable state- and input-transformation. Thus,
a repeated decomposition allows to check whether a system is forward-flat
or not. In \cite{KolarDiwoldSchoberl:2019}, this test has been formulated
in terms of certain sequences of distributions, similar to the test
for static-feedback linearizability in \cite{NijmeijervanderSchaft:1990}.
Thus, the property of forward-flatness can be checked in a computationally
efficient way. For flat systems that are not forward-flat, the decomposition
procedure as stated in \cite{KolarSchoberlDiwold:2019} necessarily
fails in one step, likewise the test as proposed in \cite{KolarDiwoldSchoberl:2019}.

In the following, we present a simple academic example that is flat
according to Definition \ref{def:Flatness} but not forward-flat.
In fact, the test for forward-flatness stated in \cite{KolarDiwoldSchoberl:2019}
fails already in the first step. Hence, the example already shows
that the class of forward-flat systems is indeed a strict subset of
the class of flat systems.
\begin{example}
	\label{ex:Brocket-1}Consider the system
	\begin{align}
	\begin{aligned}x^{1,+} & =u^{1}\\
	x^{2,+} & =u^{2}\\
	x^{3,+} & =x^{3}+x^{1}u^{2}+x^{2}u^{1}\,.
	\end{aligned}
	\label{eq:brocket}
	\end{align}
	With the choice $\zeta^{j}=g^{j}(x,u)=x^{j}$ for $j=1,2$, the combined
	map \eqref{eq:ex_diffeo} forms a diffeomorphism and we claim that
	the system has a flat output of the form 
	\begin{equation}
	y=(\zeta_{[-1]}^{1},x^{3}-x^{2}\zeta_{[-1]}^{1})\,.\label{eq:Flat_Output_Example_2}
	\end{equation}
	In order to prove that the system is flat, we need to show that $x$
	and $u$ can be expressed by \eqref{eq:Flat_Output_Example_2} and
	its forward-shifts. A repeated application of the shift operators
	to \eqref{eq:Flat_Output_Example_2} yields the set of equations
	\begin{align*}
	\begin{aligned}y^{1} & =\zeta_{[-1]}^{1}\,,\\
	y_{[1]}^{1} & =x^{1}\,,\\
	y_{[2]}^{1} & =u^{1}\,,\\
	y_{[3]}^{1} & =u_{[1]}^{1}\,,
	\end{aligned}
	& \begin{aligned}\hphantom{aaa}y^{2} & =x^{3}-x^{2}\zeta_{[-1]}^{1}\,,\\
	y_{[1]}^{2} & =x^{3}+x^{2}u^{1}\,,\\
	y_{[2]}^{2} & =x^{3}+x^{2}u^{1}+u^{2}(x^{1}+u_{[1]}^{1})\,,\\
	\\
	\end{aligned}
	\end{align*}
	which can be solved for $x^{1},x^{2},x^{3},u^{1},u^{2},\zeta_{[-1]}^{1}$
	and $u_{[1]}^{1}$,
	\begin{align}
	\begin{aligned}x^{1} & =y_{[1]}^{1}\,, & u^{1} & =y_{[2]}^{1}\,, & \zeta_{[-1]}^{1} & =y^{1}\,,\\
	x^{2} & =\tfrac{y_{[1]}^{2}-y^{2}}{y_{[2]}^{1}+y^{1}}\,, & u^{2} & =\tfrac{y_{[2]}^{2}-y_{[1]}^{2}}{y_{[3]}^{1}+y_{[1]}^{1}}\,, & u_{[1]}^{1} & =y_{[3]}^{1}\vphantom{\tfrac{y_{[1]}^{2}-y^{2}}{y_{[2]}^{1}+y^{1}}}\,.\\
	x^{3} & =\tfrac{y^{1}y_{[1]}^{2}+y_{[2]}^{1}y^{2}}{y^{1}+y_{[2]}^{1}}\,,
	\end{aligned}
	\label{eq:Param_Map_Example_2}
	\end{align}
	Hence, the system \eqref{eq:brocket} is flat with a flat output \eqref{eq:Flat_Output_Example_2}
	and the corresponding parameterization \eqref{eq:geo_param_map_2}
	contained in \eqref{eq:Param_Map_Example_2}.
\end{example}

We conclude this section with the following result for the special
case of single-input systems.
\begin{thm}
	\label{thm:siso}For single-input systems \eqref{eq:System_Eq} with
	$m=1$, the properties flatness, forward-flatness, and static feedback
	linearizability are equivalent.
\end{thm}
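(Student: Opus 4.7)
The plan is to close the chain of inclusions (static FL) $\subseteq$ (forward-flat) $\subseteq$ (flat), which hold by definition for every $m$. In the single-input case, the implication (forward-flat) $\Rightarrow$ (static FL) is already established in the forward-flatness literature: the decomposition test of \cite{KolarSchoberlDiwold:2019} and the distribution-based test of \cite{KolarDiwoldSchoberl:2019} both collapse for $m = 1$ to the classical test for static feedback linearizability, producing a state-dependent linearizing output. Hence the only genuinely new content is the implication (flat) $\Rightarrow$ (forward-flat) for $m = 1$, and I would concentrate on that.

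I would work with a flat output $y = \varphi(\zeta_{[-q_1]}, \dots, \zeta_{[-1]}, x, u, \dots, u_{[q_2]})$ from (\ref{eq:flat_output_defintion}) together with the parameterization $x = F_x(y, \dots, y_{[r-1]})$, $u = F_u(y, \dots, y_{[r]})$ from (\ref{eq:geo_param_map_2}). Functional independence of $y, y_{[1]}, \dots, y_{[r]}$ combined with the submersion onto $(x,u)$-space of dimension $n+1$ forces $r+1 \geq n+1$, i.e., $r \geq n$. Once it is shown that $r = n$ can be arranged, the parameterization becomes a local diffeomorphism between $(y, \dots, y_{[n]})$-space and $(x,u)$-space, so inversion yields $y = \Phi_0(x,u)$ and $y_{[j]} = \Phi_j(x,u)$ for $j = 1, \dots, n$. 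Equating the shift $\delta(\Phi_0(x,u)) = \Phi_0(f(x,u), u_{[1]})$ with $\Phi_1(x,u)$, which does not depend on $u_{[1]}$, forces $\partial_u \Phi_0 = 0$ on an open set; hence $y = \Phi_0(x)$ is a purely state-dependent flat output. This is manifestly of forward-flat form (with $q_1 = q_2 = 0$), and in fact a linearizing output, delivering static feedback linearizability directly.

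The main obstacle is justifying that $r = n$ can always be arranged. The dimension count only yields $r \geq n$; if $r > n$ the Jacobian of $F = (F_x, F_u)$ has a kernel of dimension $r - n$, and one must argue that this redundancy can be absorbed into a new flat output with strictly fewer shifts. I would attempt this by exploiting the functional independence of the full family of shifts $\dots, \delta^{-1}\varphi, \varphi, \delta\varphi, \dots$ in conjunction with the codistribution-based arguments of the appendix, iteratively shrinking $r$. This minimality step is the most delicate differential-geometric part of the proof; the remaining shift-consistency computations are essentially routine.
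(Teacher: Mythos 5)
Your overall architecture is sound (the inclusions (static FL) $\subseteq$ (forward-flat) $\subseteq$ (flat) are definitional, and the endgame — inverting a diffeomorphic parameterization to get $y=\Phi_0(x,u)$ and then killing $\partial_u\Phi_0$ by comparing $\delta(\Phi_0)=\Phi_0(f(x,u),u_{[1]})$ with the $u_{[1]}$-independent $\Phi_1(x,u)$ — is correct). But the step you yourself flag as the ``main obstacle'' is precisely where the entire content of the theorem lives, and you do not supply an argument for it. Your plan is to establish $r\geq n$ by dimension count and then ``absorb the redundancy'' of a rank-deficient Jacobian into a new flat output with fewer shifts; as stated this is not a proof, and it is not clear how the kernel of $\partial F$ would yield a new flat output at all. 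In particular, nothing in your sketch ever engages with the backward-shift variables $\zeta_{[-q_1]},\dots,\zeta_{[-1]}$, which is the whole point: a flat output in the sense of Definition \ref{def:Flatness} may depend on them, and the theorem is essentially the statement that for $m=1$ it cannot (after re-shifting).

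The paper closes this gap with a short, direct cancellation argument that your proposal is missing. Applying $\delta$ to \eqref{eq:flat_output_defintion} replaces $\zeta_{[-q_1]}$ by $\zeta_{[-q_1+1]}$, so among $\varphi,\delta(\varphi),\dots,\delta^r(\varphi)$ only $\varphi$ itself depends on the coordinate $\zeta_{[-q_1]}^1$. Since $x$ and $u$ do not depend on that coordinate and the shifts of $\varphi$ are functionally independent, the parameterization $F$ cannot depend on its zeroth argument $y$ at all; hence $\delta(\varphi)$ is again a flat output with one fewer backward shift, and iterating eliminates all $\zeta_{[-j]}$-dependence, yielding a forward-flat output $\varphi(x,u,\dots,u_{[q_2]})$. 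The identical argument run on the highest forward shift of $u$ then shows $\varphi$ depends on $x$ only, with $u$ entering first at the $n$-th shift, which is static feedback linearizability — so the paper does not need to outsource the implication (forward-flat) $\Rightarrow$ (static FL) to the decomposition tests of \cite{KolarSchoberlDiwold:2019} and \cite{KolarDiwoldSchoberl:2019} as you propose. You should replace your $r=n$ reduction with this variable-by-variable cancellation; your inversion argument then becomes unnecessary, since the conclusion drops out directly.
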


\begin{proof}
	The implication static feedback linearizability $\Rightarrow$ forward-flatness
	$\Rightarrow$ flatness follows directly from the corresponding definitions.
	For the other direction, consider a general flat output 
	\begin{equation}
	y=\varphi(\zeta_{[-q_{1}]}^{1},\dots,\zeta_{[-1]}^{1},x,u^{1},\dots,u_{[q_{2}]}^{1})\label{eq:siso_flat_output}
	\end{equation}
	of a system with $m=1$ input. Since the forward-shifts of \eqref{eq:siso_flat_output}
	are independent of $\zeta_{[-q_{1}]}^{1}$, \eqref{eq:siso_flat_output}
	would be the only function in the parameterization \eqref{eq:geo_param_map_2}
	depending on this variable. Thus, $\zeta_{[-q_{1}]}^{1}$ could not
	cancel out and accordingly \eqref{eq:siso_flat_output} itself must
	not be present in the parameterization \eqref{eq:geo_param_map_2}.
	Repeating this argumentation shows that \eqref{eq:geo_param_map_2}
	can only contain forward-shifts of \eqref{eq:siso_flat_output} which
	are already independent of $\zeta_{[-q_{1}]}^{1},\dots,\zeta_{[-1]}^{1}$.
	However, the first such forward-shift of \eqref{eq:siso_flat_output}
	is obviously a forward-flat output 
	\begin{equation}
	y=\varphi(x,u^{1},\dots,u_{[q_{2}]}^{1})\,.\label{eq:siso_forward_output}
	\end{equation}
	A similar argumentation shows that \eqref{eq:siso_forward_output}
	can actually only depend on $x$, and $u^{1}$ only appears in the
	$n$-th forward-shift. Otherwise, the forward-shifts of u could not
	cancel out and a parameterization \eqref{eq:geo_param_map_2} would
	not be possible. Thus, \eqref{eq:siso_forward_output} is a linearizing
	output in the sense of static feedback linearizability.
\end{proof}
\begin{rem}
	With Theorem \ref{thm:siso}, the question whether flatness is preserved
	under exact discretization can be reduced to the question whether
	static feedback linearizability is preserved for single-input systems.
	However, as shown in \cite{Grizzle:1986} by a counterexample, this
	is in general not true. A practical nonlinear system which remains
	flat under exact discretization is e.g. the wheeled mobile robot discussed
	in \cite{Aranda-BricaireMoog:2008}.
\end{rem}

\section{Trajectory Planning and Dynamic Feedback Linearization} \label{sec:Feedback-Linearization}In
this section, we show that flat systems (in the extended sense with
backward-shifts) still allow straightforward trajectory planning and
dynamic feedback linearization.

\subsection{Trajectory Planning\label{subsec:Trajectory-Planning}}

The popularity of differentially flat systems is mainly due to the
fact that the knowledge of a flat output allows an elegant solution
to motion planning problems. In this section, we show that also discrete-time
flat systems according to Definition \ref{def:Flatness} allow a straightforward
planning of trajectories.

Usually the motion planning problem consists in finding trajectories
$(x(k),u(k))$ that satisfy the system equations \eqref{eq:System_Eq}
and some initial and final conditions
\begin{align*}
\begin{aligned}(x(k_{i}),u(k_{i})) & =(x_{i},u_{i})\,,\end{aligned}
& (x(k_{f}),u(k_{f}))=(x_{f},u_{f})\,,
\end{align*}
with $k_{f}>k_{i}$. For flat systems, this task can be formulated
in terms of trajectories $y(k)$ for the flat output. Since every
trajectory $y(k)$ corresponds to a solution of \eqref{eq:System_Eq},
it remains to require that the trajectory $y(k)$ meets
\begin{align}
\begin{aligned}(x_{i},u_{i}) & =F(y(k_{i}),y(k_{i}+1),\dots,y(k_{i}+r))\\
(x_{f},u_{f}) & =F(y(k_{f}),y(k_{f}+1),\dots,y(k_{f}+r))\,.
\end{aligned}
\label{eq:initial}
\end{align}
If we assume that $k_{f}>k_{i}+r$ holds, then since the parameterization
\eqref{eq:geo_param_map_2} is a submersion, the set of equations
\eqref{eq:initial} can be solved independently for $2(n+m)$ values
of $y(k_{i}),\dots,y(k_{f}+r)$.\footnote{For certain parameterizations the assumption $k_{f}>k_{i}+r$ may
	be relaxed. It would be sufficient to require that the integer $k_{f}$
	is large enough, such that \eqref{eq:initial} can still be solved
	for arbitrary $2(n+m)$ values of the set $y(k_{i}),\dots,y(k_{f}+r)$.} The remaining values of $y(k_{i}),\dots,y(k_{f}+r)$ can be chosen
arbitrarily, and thus the trajectories $y(k)$ are in general not
unique.\footnote{Like in the continuous-time case, this property can be very beneficial
	in optimal control problems, e.g. minimizing control effort.} Once the trajectories $y(k)$ are determined, the corresponding state-
and input-trajectories are also uniquely determined by
\begin{align*}
(x(k),u(k))=F(y(k),y(k+1),\dots,y(k+r))\,,
\end{align*}
for $k=k_{i},\dots,k_{f}$. Since this procedure allows to connect
any two points of the state space (locally, where the system is flat),
we immediately get the following result.
\begin{thm}
	\label{thm:reachability}Flat systems according to Definition \ref{def:Flatness}
	are locally reachable.
\end{thm}

With Theorem \ref{thm:reachability} and the fact that every reachable
linear system can be transformed into Brunovsky normal form, we get
the following corollary.
\begin{cor}
	For linear time-invariant systems the properties flatness, forward-flatness
	and static feedback linearizability are equivalent.
\end{cor}

To illustrate the practical applicability of discrete-time flatness,
in the following we present a simulation result for the sampled-data
model of an induction motor. Similar to \cite{MartinRouchon:1996},
we compute a feedforward control which transfers the rotor speed between
two stationary set-points. However, instead of the classical approach
to sample and hold a feedforward control obtained from the continuous-time
model, we directly compute a discrete-time feedforward control based
on an implicit Euler-discretization of the system.
\begin{example}
	We consider the reduced-order continuous-time model of an induction
	motor discussed in \cite{Chiasson:1998}, with the state $x=(\omega,\psi_{d},\rho)$,
	the input $u=(i_{d},i_{q})$, and the same constant values $\mu,\tau_{L},J,\eta,M,n_{p}$
	as in \cite{MartinRouchon:1996}. It is well-known that the continuous-time
	system possesses a flat output which consists of the rotor speed $\omega$
	and the flux angle $\rho$. Based on an implicit Euler-discretization
	given by
	\begin{equation}
	\begin{aligned}\tfrac{1}{T_{s}}(x^{1,+}-x^{1}) & =\mu x^{2,+}u^{2}-\tfrac{\tau_{L}}{J}\\
	\tfrac{1}{T_{s}}(x^{2,+}-x^{2}) & =-\eta x^{2,+}+\eta Mu^{1}\\
	\tfrac{1}{T_{s}}(x^{3,+}-x^{3}) & =n_{p}x^{1,+}+\eta M\tfrac{u^{2}}{x^{2,+}}
	\end{aligned}
	\label{eq:impl_euler}
	\end{equation}
	with sampling time $T_{s}$, a discrete-time system \eqref{eq:System_Eq}
	can be derived by solving \eqref{eq:impl_euler} for $x^{1,+},x^{2,+}$
	and $x^{3,+}$. The obtained system is flat in the sense of Definition
	\ref{def:Flatness}, and with the choice $\zeta^{j}=g^{j}(x,u)=u^{j}$
	for $j=1,2$, a flat output is given by
	\[
	y=(x^{1}+T_{s}(\tfrac{\tau_{L}}{J}-\mu x^{2}\zeta_{[-1]}^{2}),x^{3}-T_{s}(n_{p}x^{1}+\tfrac{M\eta\zeta_{[-1]}^{2}}{x^{2}}))\,.
	\]
	This flat output has the beneficial property $y_{[1]}=(x^{1},x^{3})$,
	i.e., its first forward-shift coincides with the continuous-time flat
	output. From the corresponding parametrization \eqref{eq:geo_param_map_2},
	a discrete-time feedforward control has been computed that transfers
	the rotor speed between two stationary set-points like in \cite{MartinRouchon:1996}.
	Applying the calculated feedforward control (piecewise constant during
	the sampling intervals) to the continuous-time system yields the simulation
	result shown in Fig. \ref{fig:Open-loop-simulation-results}. It can
	be observed that the reference trajectory is perfectly tracked.
	\begin{figure}
		\centering
		
		\includegraphics[width=1\columnwidth]{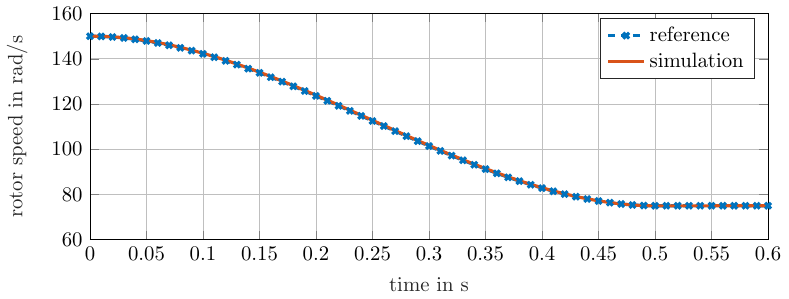}\caption{\label{fig:Open-loop-simulation-results}Open-loop simulation result
			($T_{s}=10\mathrm{ms}$, as in \cite{MartinRouchon:1996}).}
	\end{figure}
\end{example}

\subsection{Dynamic Feedback Linearization}

In the continuous-time framework, flatness is closely related to the
dynamic feedback linearization problem. To be precise, the class of
differentially flat systems is equivalent to the class of systems
linearizable via endogenous dynamic feedback. A continuous-time dynamic
feedback $\dot{z}=\alpha(x,z,v)$ with $u=\beta(x,z,v)$ is said to
be endogenous, if there exists a one-to-one correspondence between
trajectories of the closed-loop system and trajectories of the original
system. As a consequence, $z$ and $v$ can be expressed as functions
of $x$, $u$ and time derivatives of $u$.

According to \cite{Aranda-BricaireMoog:2008}, a discrete-time dynamic
feedback is said to be endogenous, if its states $z$ and inputs $v$
can be expressed as functions of $x$, $u$ and forward-shifts of
$u$. It can be shown that the class of discrete-time systems that
is linearizable via endogenous dynamic feedback in the sense of \cite{Aranda-BricaireMoog:2008}
exactly corresponds to the class of forward-flat systems. In the following,
we show that also for flat systems according to Definition \ref{def:Flatness}
there always exists a linearizing discrete-time dynamic feedback.
However, in general the required feedback is not contained within
the class of endogenous dynamic feedbacks proposed in \cite{Aranda-BricaireMoog:2008}.
\begin{thm}
	\label{thm:dynamic_feedback_linearization}A flat system \eqref{eq:System_Eq}
	can be linearized by a dynamic feedback
	\begin{align}
	\begin{aligned}z^{+} & =\alpha(x,z,v)\,,\end{aligned}
	\hphantom{a} & u=\beta(x,z,v)\label{eq:general_dynamic_feedback}
	\end{align}
	with the following properties:
\end{thm}

\begin{enumerate}
	\item[(a)] The closed-loop system is submersive.
	\item[(b)] The trajectories of the closed-loop system are in one-to-one correspondence
	to the trajectories of the original system.
\end{enumerate}
\begin{proof}
	The fact that the parameterizing map \eqref{eq:geo_param_map_2} is
	a submersion implies that also the parameterization $F_{x}$ is a
	submersion. Consequently, there exists a map $z=F_{z}(y,\dots,y_{[R-1]})$,
	such that the combined map $(x,z)=(F_{x},F_{z}):=F_{xz}$ forms a
	diffeomorphism, with $\dim(z)=p\leq mr-n$.  We define the map $\Phi(y,\dots y_{[R]})$
	given by
	\begin{equation}
	\begin{aligned}x & =F_{x}(y,\dots,y_{[R-1]})\,, & v & =y_{[R]}\,,\\
	z & =F_{z}(y,\dots,y_{[R-1]})
	\end{aligned}
	\label{eq:phi}
	\end{equation}
	and its inverse $\hat{\Phi}(x,z,v)$ given by
	\begin{align}
	\begin{array}{c}
	\begin{aligned}(y,\dots,y_{[R-1]}) & =\hat{F}_{xz}(x,z)\,,\end{aligned}
	\end{array} & y_{[R]}=v\,.\label{eq:phi_d}
	\end{align}
	Based on \eqref{eq:phi_d},  a linearizing dynamic feedback is given
	by
	\begin{align}
	\begin{aligned}z^{+} & =\delta_{y}(F_{z})\circ\hat{\Phi}(x,z,v)\,,\end{aligned}
	& u=F_{u}\circ\hat{\Phi}(x,z,v)\,,\label{eq:dynamic_feedback}
	\end{align}
	as we prove next by transforming the closed-loop dynamics
	\begin{align}
	\begin{aligned}x^{+} & =f(x,F_{u}\circ\hat{\Phi}(x,z,v))\\
	z^{+} & =\delta_{y}(F_{z})\circ\hat{\Phi}(x,z,v)
	\end{aligned}
	\label{eq:closed_loop_deriv}
	\end{align}
	into Brunovsky normal form. With the state-transformation $(x,z)=F_{xz}(y,\dots,y_{[R-1]})$
	and the input-transformation $v=y_{[R]}$ we get
	\[
	(y^{+},\dots,y_{[R-1]}^{+})=\hat{F}_{xz}\circ\left[\begin{array}{c}
	f(x,F_{u}\circ\hat{\Phi}(x,z,v))\\
	\delta_{y}(F_{z})\circ\hat{\Phi}(x,z,v)
	\end{array}\right]\circ\Phi\,,
	\]
	which can be rewritten as
	\begin{align}
	\begin{aligned}(y^{+},\dots,y_{[R-1]}^{+}) & =\hat{F}_{xz}\circ\left[\begin{array}{c}
	f(F_{x},F_{u})\\
	\delta_{y}(F_{z})
	\end{array}\right]\,.\end{aligned}
	\label{eq:zw_dyn}
	\end{align}
	Since the parameterization \eqref{eq:geo_param_map_2} satisfies the
	system equations identically, by substituting $F$ into $\delta(x)=f(x,u)$
	we get the relation $\delta_{y}(F_{x})=f(F_{x},F_{u})$ and may rewrite
	\eqref{eq:zw_dyn} as
	\[
	(y^{+},\dots,y_{[R-1]}^{+})=\hat{F}_{xz}\circ\left[\begin{array}{c}
	\delta_{y}(F_{x})\\
	\delta_{y}(F_{z})
	\end{array}\right]\,.
	\]
	Due to $\hat{F}_{xz}\circ(\delta_{y}(F_{x}),\delta_{y}(F_{z}))=\delta_{y}(\hat{F}_{xz}\circ(F_{x},F_{z}))$,
	and since per definition $\hat{F}_{xz}\circ(F_{x},F_{z})$ yields
	identically $(y,\dots,y_{[R-1]})$, the Brunovsky normal form follows
	as\footnote{The multi-index $R=(r_{1},\dots,r_{m})$ denotes the length of the
		individual chains $(y^{j,+},\dots,y_{[r_{j}-1]}^{j,+})=(y_{[1]}^{j},\dots,y_{[r_{j}]}^{j})$
		of the Brunovsky normal form. For flat systems \eqref{eq:System_Eq}
		with $\Rank(\partial_{u}f)<m$, redundant inputs can be chosen as
		components of the flat output, and the Brunovsky normal form of the
		corresponding extended system has chains of length zero.}
	\[
	(y^{+},\dots,y_{[R-1]}^{+})=\delta_{y}(y,\dots,y_{[R-1]})=(y_{[1]},\dots,y_{[R]})\,.
	\]
	Since the closed-loop system can be transformed into Brunovsky normal
	form, the dynamic feedback \eqref{eq:dynamic_feedback} preserves
	both submersivity and reachability, and it remains to show condition
	(b). Due to \eqref{eq:phi} we have a one-to-one correspondence between
	trajectories of the closed-loop system and trajectories of the trivial
	system. However, the trajectories of the trivial system are by the
	definition of flatness in one-to-one correspondence to the trajectories
	of the original system, which completes the proof.
\end{proof}
Since two submersive systems \eqref{eq:System_Eq} with a one-to-one
correspondence between their trajectories are either both flat or
non-flat, we get the following corollary.
\begin{cor}
	Applying a discrete-time dynamic feedback \eqref{eq:general_dynamic_feedback}
	with the properties (a) and (b) of Theorem \ref{thm:dynamic_feedback_linearization}
	does not affect the flatness of a system \eqref{eq:System_Eq}.
\end{cor}

In contrast to a continuous-time endogenous dynamic feedback, the
additional condition (a) is required. Otherwise, the reachability
and hence also flatness could be lost. The difference to the notion
of discrete-time endogenous dynamic feedback introduced in \cite{Aranda-BricaireMoog:2008}
is that in our case the variables $z$ and $v$ of \eqref{eq:general_dynamic_feedback}
may depend on both forward- and backward-shifts $(\dots,\zeta_{[-1]},x,u_{[1]},\dots)$
of the system variables.
\begin{rem}
	\label{rem:Dynamic_feedback_lin_equivalent_flatness}In the classical
	dynamic feedback linearization problem, the one-to-one correspondence
	between trajectories of the closed-loop system and the original system
	is not required. Thus, the linearizing output of the closed-loop system
	can possibly not be expressed in terms of forward- and backward-shifts
	of the original system variables.
\end{rem}

\section{Conclusion}

In this contribution, we have investigated the extension of the notion
of discrete-time flatness to both forward- and backward-shifts. We
have shown that adding backward-shifts fits very nicely with the concept
of one-to-one correspondence of solutions of the original system and
a trivial system, as it is well-known from the continuous-time case.
Even with backward-shifts, reachability and controllability still
hold and trajectories can be planned in a straightforward way. Furthermore,
such systems can be linearized by a particular subclass of dynamic
feedbacks. Thus, from an application point of view, the basic properties
of forward-flat systems are preserved. Since we expect that the class
of flat systems in the extended sense including backward-shifts is
significantly larger than the class of forward-flat systems, this
opens many new perspectives for practical applications as illustrated
by the presented induction motor. Future research will deal with the
systematic construction of flat outputs and finding necessary and/or
sufficient conditions as they already exist for forward-flat systems.
Another open question, which is motivated by the continuous-time case,
is whether the class of flat systems is only a subset of or equivalent
to the class of systems linearizable by dynamic feedback, see Remark
\ref{rem:Dynamic_feedback_lin_equivalent_flatness}.

\bibliographystyle{IEEEtran}
\bibliography{IEEEabrv,Bibliography_Johannes_April_2020}

\end{document}